\documentclass[11pt]{amsart}
\usepackage{amssymb}
\usepackage{amsthm}

\usepackage{epsfig}
\usepackage{color}
\usepackage{verbatim}

\usepackage{tikz}

\usepackage{amsmath}
\usepackage{hyperref}
\usepackage{xypic}
\usepackage{amscd}
\usepackage{color}

\newfont{\sheaf}{eusm10 scaled\magstep1}

\newtheorem{thm}{Theorem}[section]
\newtheorem{cor}[thm]{Corollary}
\newtheorem{lemma}[thm]{Lemma}

\newtheorem{defn}[thm]{Definition}
\newtheorem{claim}[thm]{Claim}

\theoremstyle{definition}

 \newtheorem{example}[thm]{Example}

\def\length{\operatorname{length}}
\def\c1{\operatorname{c_1}}
\def\c2{\operatorname{c_2}}

\def\Sym{\operatorname{Sym}}

\def\e{\mathbf{e}}

\def\c{\mathbf{c}}

\def\PP{{\mathbb P}}

\def\H{{\mathcal H}}

\def\+{\oplus}                   
\def\*{\otimes}                  

\def\id{\operatorname{id}}

\def\bP{{\mathbb P}}

\makeatletter
\@namedef{subjclassname@2020}{\textup{2020} Mathematics Subject Classification}
\makeatother

\begin{document}

\title[On the existence problem]{On the Hurwitz existence problem for branched covers of the projective line}

\author[C.~Ciliberto]{Ciro Ciliberto}
\address{C.~Ciliberto, Dipartimento di Matematica,  Universit\`a di Roma ``Tor Vergata'', Via della Ricerca Scientifica, 00173 Roma, Italy}
\email{cilibert@axp.mat.uniroma2.it}

\author[A.~L.~Knutsen]{Andreas Leopold Knutsen}
\address{A.~L.~Knutsen, Department of Mathematics, University of Bergen,
Postboks 7800,
5020 Bergen, Norway}
\email{andreas.knutsen@math.uib.no}

\author[S.~Torelli]{Sara Torelli}
\address{S.~Torelli, Dipartimento di Matematica, Politecnico di Milano, Piazza Leonardo da Vinci 12, 20133 Milano, Italy}
\email{sara.torelli7@gmail.com}

\begin{abstract}
We give an alternative proof of the Hurwitz existence problem for branched covers of $\PP^1$ in the case where the number of ramification points equals the number of branch points, that is, where all the ramification profiles are of the form $[e,1,\ldots,1]$ with $e \geq 2$. 
\end{abstract}

\maketitle

To a branched cover $f:C \to B$ of degree $d$ between compact Riemann surfaces of genera $g=g(C)$ and $b=g(B)$ one may associate the combinatorial datum of $g,b,d$ plus the number $n$ of branch points and the $n$ partitions $\pi_1,\ldots,\pi_n$ of $d$ given by the local degrees of $f$ at the preimages of the branch points. Precisely, for each branch point $y_i \in B$, $i \in \{1,\ldots,n\}$, let $f^{-1}(y_i)=\{x_{i,1},\ldots,x_{i,\ell_i}\}$,  with $x_{i,1},\ldots,x_{i,\ell_i}$ all distinct; then 
we may write
\[ f^*y_i=\displaystyle\sum_{j=1}^{\ell_i} e_{i,j} x_{i,j}, \]
 for positive integers $e_{i,j}$ such that $\displaystyle\sum_{j=1}^{\ell_i}e_{i,j}=d$. Then each $x_{i,j}$ with $e_{i,j} \geq 2$ is called a {\it ramification point of $f$}, with {\it ramification order} $e_{i,j}$
 and {\it ramification index} $e_{i,j}-1$. The induced partition of $d$ over  $y_i$ is
 \[ \pi_i:=[e_{i,1},\ldots, e_{i,\ell_i}]\]
 and this is also called the {\it ramification profile over $y_i$}.

The data above is subject to the {\it Riemann-Hurwitz formula} 
\begin{equation}
  \label{eq:RH_cond} 2(g-1)=2d(b-1)+ \displaystyle\sum_{i=1}^n\displaystyle\sum_{j=1}^{\ell_i}(e_{i,j}-1). 
\end{equation}
An old question of Hurwitz \cite{Hur} asks which combinatorial data satisfying \eqref{eq:RH_cond} actually do occur for a branched cover, and if so, how many covers with such data exist.

If $b>0$ it is known that, for any fixed $B$, branched covers satisfying a given data do indeed exist as soon as the Riemann-Hurwitz condition \eqref{eq:RH_cond} is satisfied \cite[Prop. 3.4]{EKS} (see also \cite[Thm.1.11]{Pe}). In the case $b=0$, that is, when $B=\PP^1$, a similar result is no longer true. There are data satisfying the Riemann-Hurwitz condition \eqref{eq:RH_cond} for which no branched cover exists (see Example \ref{ex:noex} below). 
Many partial answers to the Hurwitz existence problem have been given, but a complete solution is still lacking. We refer to \cite{Pe} for an exposition of the problem and known results, with several examples, also in the non-compact (and thus non-orientable) case.

In this note we consider the case of covers of $B=\PP^1$ where the number of ramification points equals the number of branch points, that is, when all partitions are of the form
\[ \pi_i=[e_i,\underbrace{1,\ldots,1}_\text{$d-e_i$ times}]=:[e_i,1^{d-e_i}], \;\; 2 \leq e_i \leq d.\]
The Riemann-Hurwitz condition \eqref{eq:RH_cond} in this case reads
\begin{equation}
  \label{eq:RH_cond2} \displaystyle\sum_{i=1}^ne_i=2(g-1+d)+n.
\end{equation}

It turns out that in this case the Riemann-Hurwitz condition is {\it sufficient} for the existence of a branched cover:

\begin{thm} \label{thm:esistenza}
Let $d \geq 2$,  $n \geq 1$  and  $g \geq 0$  be integers, and $\mathbf{e}=(e_1,\ldots,e_n)$ an $n$-tuple of integers such that $2 \leq e_i \leq d$ for all $i \in \{1,\ldots,n\}$ and such that the Riemann-Hurwitz condition \eqref{eq:RH_cond2} holds.

  Let $y_1,\ldots,y_n \in \PP^1$ be distinct points. Then there exists a compact Riemann surface $C$ of genus $g$ and a cover $f:C \to \PP^1$ branched at $y_1,\ldots, y_n$ with ramification profile $[e_i,1^{d-e_i}]$ over $y_i$.
  \end{thm}

  This result is a consequence of \cite[Cor. F]{CFG}, and in this note we give an alternative proof. Let us also mention that the following special cases of the theorem were known earlier: If $\pi_1=[d]$ or $\pi_1=[1,d-1]$, $n=1$ or more in general if $\sum_{i=1}^n(e_i-1)\geq 3(d-1)$ as proven in \cite{EKS}; if $n=3$ and $\pi_3=[k,1,\cdots,1]$ as proven in \cite{B}; if the domain is $\bP^1$ as proven in \cite{B,SX}; if $\sum_{i=1}^n(e_i-1) \leq 2(d-1)+g$ as proven in \cite{Os}. Our proof is independent of the aforementioned results.

  We state here an interesting consequence regarding {\it nonemptiness} of Hurwitz spaces.

Consider, for any integers $d \geq 2$, $n,g \geq 0$, and any $n$-tuple of integers $e_i$ such that \linebreak $2 \leq e_i \leq d$, the {\it Hurwitz space} \[ \H_{g,k;\e} \]
parametrizing pairs
\[
  \{(C,x_1,\ldots,x_n),f:C \to \PP^1\},
\]
where $C$ is a smooth curve of genus $g$, and $x_1,\ldots, x_n \in C$ are distinct marked points, and $f:C \to \PP^1$ is a morphism of degree $d$ having ramification order $e_i$ at $x_i$, and simple ramification at $r:=2(g-1-k(b-1))+n-e$ points completing the ramification profile. 

For the general member in any component of  $ \H_{g,k;\e}$, using  the {\it Riemann Existence Theorem} (cf., e.g., \cite[III, Cor. 4.10]{Mi} or \cite[Thm. 1.8]{Pe}), one sees that the number of ramification points equals the number of branch points, which means that the cover $f:C \to \PP^1$ is branched over
$n+r$ points with ramification profile $[e_i,1^{d-e_i}]$ over $y_i:=f(x_i)$, $i \in \{1,\ldots,n\}$, and ramification profile $[2,1^{d-2}]$ over a remaining set of unmarked branch points. The Riemann-Hurwitz condition \eqref{eq:RH_cond} is of course a {\it necessary} condition for the nonemptiness of $\H_{g,k;\e}$, and it now reads
\begin{equation}
  \label{eq:RH_cond3} \displaystyle\sum_{i=1}^ne_i \leq 2[g-1-d(b-1)]+n,
\end{equation}
with the {\it inequality} being due to the fact that we have not fixed all ramification points.

As an immediate consequence of Theorem \ref{thm:esistenza},  one can settle the nonemptiness problem regarding Hurwitz schemes over $\PP^1$:

\begin{cor} \label{cor:hur}
  The Hurwitz moduli space $\H_{g,k;\e}$ is
nonempty if and only if  the Riemann-Hurwitz condition
\eqref{eq:RH_cond3} holds.
\end{cor}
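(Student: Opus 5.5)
We prove Corollary \ref{cor:hur} from Theorem \ref{thm:esistenza}. Here $k$ plays the role of the degree $d$, and $b=0$, so \eqref{eq:RH_cond3} reads $\sum_{i=1}^n e_i \leq 2(g-1+d)+n$.

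\emph{Necessity.} Suppose $\H_{g,k;\e}$ is nonempty, and take a member $f:C\to\PP^1$. The ramification indices $e_i-1$ at the marked points $x_i$ contribute to the total ramification in \eqref{eq:RH_cond}, and every other contribution is nonnegative. Hence
\[
2(g-1)+2d \;\geq\; \sum_{i=1}^n (e_i-1),
\]
which is exactly \eqref{eq:RH_cond3}. Equivalently, $r=2(g-1+d)+n-\sum_i e_i$ counts the remaining simple ramification points, and this count must be nonnegative.

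\emph{Sufficiency.} Assume \eqref{eq:RH_cond3} holds, and set $r:=2(g-1+d)+n-\sum_i e_i\geq 0$. Form the extended tuple
\[
\e'=(e_1,\ldots,e_n,\underbrace{2,\ldots,2}_{r}),
\]
which has $n+r$ entries. Since $d\geq 2$, every entry satisfies $2\le e'_j\le d$. Its sum is $\sum_i e_i+2r = 2(g-1+d)+(n+r)$, so $\e'$ satisfies \eqref{eq:RH_cond2} with $n+r$ in place of $n$.

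A small edge case needs attention. If $n+r=0$, then $g-1+d=0$, which forces $d\leq 1$, contradicting $d\geq 2$. So $n+r\geq 1$ and Theorem \ref{thm:esistenza} applies.

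Choose $n+r$ distinct points $y_1,\ldots,y_{n+r}\in\PP^1$. Theorem \ref{thm:esistenza} gives a cover $f:C\to\PP^1$ of degree $d$, with $C$ of genus $g$, having profile $[e'_j,1^{d-e'_j}]$ over each $y_j$. Over $y_i$ with $i\le n$ there is a unique point $x_i$ of ramification order $e_i$. These points are distinct because they lie over distinct branch points. The remaining ramification consists of $r$ simple ramification points over $y_{n+1},\ldots,y_{n+r}$. Therefore $\{(C,x_1,\ldots,x_n),f\}$ is a point of $\H_{g,k;\e}$, which is thus nonempty.

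There is no real obstacle here; the only things to check are the bookkeeping identifying $k$ with $d$ and $b=0$, and the degenerate case $n+r=0$ handled above.
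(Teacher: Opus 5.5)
Your proof is correct and is the argument the paper intends. The paper gives no separate proof and calls the corollary an immediate consequence of Theorem \ref{thm:esistenza}: necessity comes from Riemann--Hurwitz, and sufficiency from adjoining $r$ extra branch points with profile $[2,1^{d-2}]$ before applying the theorem. Your additional checks, reading $k$ as the degree $d$ with $b=0$ and ruling out $n+r=0$ (relevant because $n=0$ is allowed in the definition of $\H_{g,k;\e}$), are exactly the bookkeeping the paper leaves implicit.
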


\vspace{0.2cm} {\it Acknowledgments.} The authors thank Margherita Lelli-Chiesa and Carlo Petronio for enlightning conversations and correspondence on the matter. A.L.K. was partially supported 
	by the Trond Mohn Foundation's project ``Pure Mathematics in
	Norway''. C.C. and S.T. are members of GNSAGA of the Istituto Nazionale di Alta Matematica ``F. Severi''. In the first version of the note posted on arXiV we claimed that the Hurwitz existence problem was not known in the case we consider here; we would like to thank Gianluca Faraco for pointing out to us
that the result had been proved in \cite{CFG}.

\section{Proof of the main theorem} \label{sec:proof}
  
 \subsection{The Riemann Existence Theorem}
 
The proof is based on the Riemann Existence Theorem,
which states that there is a one-to-one correspondence between degree $d$-covers 
of $\PP^1$ branched at
$n$ points $y_1,\ldots,y_n$ of $\PP^1$ with ramification profile
\[\pi_i:=[e_{i,1},\ldots, e_{i,\ell_i}] \]
over $y_i$ and conjugacy classes of $n$-tuples $(\sigma_1,\ldots,\sigma_n)$ of permutations in the symmetric group $\Sym(d)$ satisfying:
\begin{itemize}
\item $\sigma_i$ has cycle structure $\pi_i$,
\item  the subgroup generated by the $\sigma_i$s is transitive,
  \item $\sigma_1\cdots \sigma_n=\id_d$ (the identity element in $\Sym(d)$).  
\end{itemize}

Before starting the proof, it can be instructive to look at the following example mentioned in the introduction:

\begin{example}{\rm (\cite[Prop. 1.7 or Prop. 1.10]{Pe})} \label{ex:noex}
  Take $d=4$, an integer $n \geq 3$, and the partitions
  \[ \pi_1=\cdots=\pi_{n-1}=[2,2], \;\; \pi_n=[3,1].\]
  Let $g:=n-3$. Then the Riemann-Hurwitz formula \eqref{eq:RH_cond} is satisfied. However, there is no cover $C \to \PP^1$ from a genus $g$ Riemann surface $C$ with ramification profile $[\pi_1,\ldots,\pi_n]$ over $n$ points of $\PP^1$. Indeed, if it existed, there would exist $\sigma_1,\ldots,\sigma_{n-1} \in \Sym(4)$ with cyclic structure $[2,2]$ such that $\sigma_1\cdots\sigma_{n-1}$ has cyclic structure $[3,1]$. However, any two permutations  with cyclic structure $[2,2]$ are, up to conjugation, $(1,2)(3,4)$ or $(1,3)(2,4)$, so their product is either the identity or $(1,4)(2,3)$. Hence, the product of permutations  with cyclic structure $[2,2]$ can never be of cyclic structure $[3,1]$.
\end{example}

Next we will start the proof of the main result. 
We use the following conventions:
\begin{itemize}
\item The order of multiplication  of two permutations   $\sigma, \gamma \in \Sym(d)$  is defined by $ (\sigma \gamma)(x)=\sigma(\gamma(x))$.
\item  A {\it cycle (of length} or {\it order $l$)} in $\Sym(d)$ is a permutation with cyclic structure $[l,1,\ldots,1]$, with $l \leq d$, that is, a permutation that can be written in the form $(a_1\; a_2\; \cdots\; a_l)$, for $l$ distinct integers $a_1,\ldots, a_l\in \{1,\ldots,d\}$. Occasionally, and by some abuse of notation, we can consider such a cycle as an element in $\Sym(m)$ for any $m \geq l$, acting as the identity on $m-l$ elements, and cyclically on the rest.
 \item The identity $\id_d \in \Sym(d)$ is considered to be a cycle of  length $1$, as it may be written as  $\id_d=(x)$ for any $x \in \{1,\ldots,d\}$.
\end{itemize}

\subsection{Augmentations}

 We start with the following observation: If  two cycles  $\sigma_1,\sigma_2 \in \Sym(d-1)$ are not disjoint\footnote{This means that there is an element moved by both permutations}, or at least one of them is the identity, we may write
\begin{equation} \label{eq:aug0}
  \sigma_1=(x \;\; a_1 \cdots a_s) \;\; \mbox{and} \;\; \sigma_2=(b_1 \cdots b_t \;\; x),
  \end{equation}
for some $x$, where $s:=\length \sigma_1-1$ and $t:=\length \sigma_2-1$.
  Set
  \begin{eqnarray} \label{eq:aug1}
    \widetilde{\sigma}_1 & := &  (x\;d) \sigma_1= (x \;\; a_1 \cdots a_s\;\;d), \\
    \nonumber   \widetilde{\sigma}_2 & := & \sigma_2 (x\;d) = (d\;\;b_1 \cdots b_t \;\; x).
\end{eqnarray}
Then, viewing $\sigma_1$  and $\sigma_2$ as elements of $\Sym(d)$, one has 
\begin{equation} \label{eq:prodo}
  \sigma_2 \sigma_1= \widetilde{\sigma}_2\widetilde{\sigma}_1.
  \end{equation}

  We can extend this construction further: Assume that we have two cycles $\sigma_1, \sigma_2 \in \Sym(d-1)$ and an element $\alpha \in \Sym(d-1)$ such that
  \begin{eqnarray} \label{eq:cond-aug}
    & \sigma_2\;\; \mbox{and} \;\; \alpha  \sigma_1  \alpha^{-1} \;\; \mbox{(equivalently,} \;\;  \alpha^{-1}  \sigma_2 \alpha \;\; \mbox{and} \;\; \sigma_1) \;\; \mbox{are not disjoint,} & \\
\nonumber    & \mbox{or at least one of them is the identity.} &
  \end{eqnarray}
  Then we may write
  \begin{equation} \label{eq:aug0+}
  \sigma_1=(\alpha^{-1}x \;\; a_1 \cdots a_s) \;\; \mbox{and} \;\; \sigma_2=(b_1 \cdots b_t \;\; x),
  \end{equation}
  with $s:=\length \sigma_1-1$ and $t:=\length \sigma_2-1$.
Set
\begin{equation} \label{eq:aug1+}
  \widetilde{\sigma}_1:= (\alpha^{-1}x\;d)\sigma_1 =  (\alpha^{-1}x \;\; a_1 \cdots a_s\;\;d) \;\; \mbox{and} \;\; \widetilde{\sigma}_2:= \sigma_2(x\;d)=    (d\;\;b_1 \cdots b_t \;\; x).  \end{equation}
  (Thus, the construction in \eqref{eq:aug0} is the special case with $\alpha=\id_{d-1}$.) Then, again viewing $\sigma_1$, $\sigma_2$ and $\alpha$ as elements of $\Sym(d)$, one may check that
\begin{equation} \label{eq:prodo+}
  \sigma_2 \alpha \sigma_1= \widetilde{\sigma}_2\alpha \widetilde{\sigma}_1.
  \end{equation}
  Again we see that \eqref{eq:prodo} is the special case with $\alpha=\id_{d-1}$. 
   
  \begin{defn} \label{def:k-aug}
    We call the pair $\{ \widetilde{\sigma}_1,\widetilde{\sigma}_2\}$ a {\em $d$-augmentation} of the pair $\{\sigma_1,\sigma_2\}$ (with respect to $\alpha$ and $x$).

    If $\left(\gamma_1,\ldots,\gamma_n\right)$ is an $n$-tuple of cycles in $\Sym(d-1)$, a {\em $d$-augmentation} of a pair $\{\gamma_i,\gamma_j\}$ with $i<j$ is a $d$-augmentation of the pair with respect to
    \[ \alpha=
      \begin{cases}
        \gamma_{j-1}\cdots\gamma_{i+1}, & \mbox{if} \;\; i <j-1, \\
        \id_{d-1}, & \mbox{if} \;\; i =j-1.
      \end{cases}
\]
  \end{defn}

  Note that a  $d$-augmentation is simply done by inserting a $d$ in a suitable spot in each cycle.

 For a $d$-augmentation to be defined, condition \eqref{eq:cond-aug} must be satisfied. This condition is always satisfied if one among the two cycles  has  maximal length $d-1$ or if one is the identity. In our applications, one of the cycles will always have maximal length, or the two cycles are not disjoint or one is the identity and we use $\alpha=\id_{d-1}$.

  \begin{example}
    Assume $\sigma_1=\sigma_2=\id_{d-1}$. Then, for any $x \in \{1,\ldots,d-1\}$, we may write $\sigma_1=\sigma_2=(x)$ and thus
    \[ \widetilde{\sigma}_1:=(x\;d) \;\; \mbox{and} \;\; \widetilde{\sigma}_2:=(d\;x) \]
    is  a $d$-augmentation with respect to $x$ (and $\alpha=\id_{d-1}$).

    Assume that $\sigma_1$ is a length-$(d-1)$ cycle, say
\[ \sigma_1=(1 \; 2\; \cdots \; (d-1)),\]
 $\alpha \in \Sym(d-1)$ is an arbitrary permutation and $\sigma_2 \in  \Sym(d-1)$ is an arbitrary cycle of length $t+1$ for some $t  \geq 0$. Pick any $x \in \{1,\ldots,d-1\}$ moved by $\sigma_2$ if $\sigma_2 \neq \id_{d-1}$ and pick any $x \in \{1,\ldots,d-1\}$ if  $\sigma_2 =\id_{d-1}$. Then we may write
\[\sigma_2=(b_1\cdots b_t\; x), \]
and 
\begin{eqnarray*}
  \widetilde{\sigma}_1 & := & \left(1 \cdots \; (\alpha^{-1}x-1) \; \; d \; \; \alpha^{-1}x \cdots \; (d-1)\right) \\
   \widetilde{\sigma}_2 &  := &  (d \; b_1\cdots b_t\; x)
\end{eqnarray*}
    is  a $d$-augmentation with respect to $\alpha$ and $x$.
  \end{example}

  The following observations will be central:

  \begin{lemma} \label{lemma:K4}
    Let $\left(\gamma_1,\ldots,\gamma_n\right)$ be an $n$-tuple of cycles in $\Sym(d-1)$. Let $\left(\gamma'_1,\ldots,\gamma'_n\right)$ be an $n$-tuple of cycles obtained from it by $d$-augmenting one or more pairs of cycles, and viewing the remaining ones as cycles in $\Sym(d)$. Then
    \begin{itemize}
    \item[(i)]  $\gamma'_1 \cdots \gamma'_n=\gamma_1 \cdots \gamma_n$, viewing
      also $\gamma_1,\ldots,\gamma_n$ as cycles in $\Sym(d)$;
    \item[(ii)] if the group generated by $\{\gamma_1,\ldots,\gamma_n\}$ in $\Sym(d-1)$ is transitive, then so is the group  generated by $\{\gamma'_1,\ldots,\gamma'_n\}$ in $\Sym(d)$.
    \end{itemize}
  \end{lemma}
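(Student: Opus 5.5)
For (i), I will argue one augmentation at a time and then induct on the number of augmented pairs. Suppose a single pair $\{\gamma_i,\gamma_j\}$ with $i<j$ is $d$-augmented with respect to $\alpha=\gamma_{j-1}\cdots\gamma_{i+1}$ (or $\alpha=\id_{d-1}$ if $j=i+1$). By \eqref{eq:prodo+} we have $\gamma_j\alpha\gamma_i=\widetilde{\gamma}_j\alpha\widetilde{\gamma}_i$ in $\Sym(d)$. The full product factors as
\[
(\gamma_1\cdots\gamma_{i-1})\,(\gamma_i\cdots\gamma_j)\,(\gamma_{j+1}\cdots\gamma_n),
\]
and $\gamma_i\cdots\gamma_j$ is exactly the block $\gamma_j\alpha\gamma_i$ written in the composition convention of the paper. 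So replacing the two augmented cycles leaves the total product unchanged; I only need to be careful that the ordering convention ($\sigma\gamma$ means first $\gamma$) matches the way the block is read.

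When several pairs are augmented, I will proceed in the same way, one pair after another. The new element $d$ is inserted only into the augmented cycles, so the pairs must be pairwise disjoint as sets of indices. The delicate point is that the intermediate product $\alpha$ for a later pair may contain cycles that were already augmented, which would contradict the definition, where $\alpha$ is built from the original cycles. I expect this to be the main bookkeeping obstacle. My first attempt will be to verify \eqref{eq:prodo+} directly with $\alpha$ replaced by an arbitrary $\beta\in\Sym(d)$ agreeing with $\alpha$ away from $d$. Since $(x\,d)$ and $(\alpha^{-1}x\,d)$ only involve $x$, $\alpha^{-1}x$ and $d$, the identity $(x\,d)\beta(\alpha^{-1}x\,d)=\beta$ should hold provided $\beta$ sends $\alpha^{-1}x\mapsto x$ and fixes $d$.

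If that identity fails, I will fall back on reading the lemma as meaning that augmented pairs are nested or non-interlaced, which is how it is applied later: each pair is adjacent, or one of its cycles has maximal length and the blocks do not overlap. In that case the blocks are disjoint intervals of indices and the single-pair argument applies block by block.

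For (ii), I need two facts. First, each $\gamma'_k$ agrees with $\gamma_k$ on $\{1,\dots,d-1\}$ except at the points where $d$ was inserted. Second, some $\gamma'_k$ moves $d$, namely $\widetilde{\gamma}_j=(d\,b_1\cdots b_t\,x)$, which sends $x\mapsto d$. I will argue through orbits. Let $O$ be the orbit of $d$ under $G'=\langle\gamma'_1,\ldots,\gamma'_n\rangle$; it contains some point of $\{1,\dots,d-1\}$, for instance $x$. To show $O\supseteq\{1,\dots,d-1\}$, it suffices to show that $O\cup\{d\}$ is stable under each $\gamma_k$ as permutations of $\{1,\ldots,d\}$, since transitivity of $G$ then makes $O\cap\{1,\ldots,d-1\}$ everything.

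This stability follows because $\gamma'_k=\gamma_k$ for unaugmented $k$, and for augmented $k$ we have $\gamma'_k=(y\,d)\gamma_k$ or $\gamma_k(y\,d)$. So $\gamma_k=(y\,d)\gamma'_k$ or $\gamma'_k(y\,d)$, and the transposition $(y\,d)$ preserves $O$ because both $y$ and $d$ lie in $O$. Here $y$ lies in $O$ since $y$ and $d$ are consecutive in the cycle $\gamma'_k$. Hence $O$ is stable under all $\gamma_k$, $O=\{1,\dots,d\}$, and transitivity follows.
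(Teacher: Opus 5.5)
Your proof of (ii) is correct and takes a different route from the paper. The paper writes out the action of $\widetilde{\sigma}_1,\widetilde{\sigma}_2$ explicitly. It checks that each old step $z\mapsto\sigma(z)$ is realized by one or two applications of $\widetilde{\sigma}$, and that $d$ is reached from $x$. You instead show that the orbit of $d$ under $\langle\gamma'_1,\ldots,\gamma'_n\rangle$ is stable under every $\gamma_k$, because $\gamma_k$ and $\gamma'_k$ differ by a transposition $(y\;d)$ with $y$ adjacent to $d$ in $\gamma'_k$. Your version needs no case-by-case computation. Since it never uses the product identity, it works verbatim for any family of augmented pairs.

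For (i) there are two points to fix. First, the ordering. With the convention $(\sigma\gamma)(x)=\sigma(\gamma(x))$, the block $\gamma_i\gamma_{i+1}\cdots\gamma_j$ is \emph{not} $\gamma_j\alpha\gamma_i$ with $\alpha=\gamma_{j-1}\cdots\gamma_{i+1}$. In fact the increasing product is not preserved. Take $\gamma_1=(1\;2)$, $\gamma_2=(3\;1)$ in $\Sym(3)$ and $x=1$. Then $\gamma'_1=(1\;2\;4)$ and $\gamma'_2=(4\;3\;1)$, so $\gamma'_1\gamma'_2=(2\;4\;3)\neq(1\;3\;2)=\gamma_1\gamma_2$, while $\gamma'_2\gamma'_1=(1\;2\;3)=\gamma_2\gamma_1$. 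So (i) must be read as $\gamma'_n\cdots\gamma'_1=\gamma_n\cdots\gamma_1$. This is what Definition \ref{def:k-aug} is built for and what Lemmas \ref{lemma:K1}--\ref{lemma:K2} use. With that reading, the middle block $\gamma_j\gamma_{j-1}\cdots\gamma_i$ literally equals $\gamma_j\alpha\gamma_i$, and \eqref{eq:prodo+} applies.

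Second, your worry about several pairs is justified, and the restriction to non-interlaced pairs cannot be dropped; it is not merely a fallback reading. Suppose $i<k<j<l$ and both $\{\gamma_i,\gamma_j\}$ and $\{\gamma_k,\gamma_l\}$ are augmented. The intermediate product for the first pair contains $\gamma'_k$ but not $\gamma'_l$, so it moves $d$, and your $\beta$-identity has no valid hypothesis. (Note also that a $\beta$ agreeing with $\alpha$ on all of $\{1,\ldots,d-1\}$ is just $\alpha$.) Here is a concrete failure. Let $\gamma_1=\cdots=\gamma_4=(1\;2\;3)$. Augment $\{\gamma_1,\gamma_3\}$ at $x=1$ and $\{\gamma_2,\gamma_4\}$ at $x=3$, each with respect to the original $\alpha$. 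This gives $(3\;1\;2\;4)$, $(2\;3\;1\;4)$, $(4\;2\;3\;1)$, $(4\;1\;2\;3)$, whose product $\gamma'_4\gamma'_3\gamma'_2\gamma'_1$ sends $4\mapsto 1$, whereas $\gamma_4\gamma_3\gamma_2\gamma_1=(1\;2\;3)$ fixes $4$.

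Your fallback also needs one correction. Nested pairs do not give disjoint intervals, so you must augment from the inside out. The inner case shows that the intermediate product is still $\alpha$ in $\Sym(d)$, and that is exactly where your $\beta$-identity is the right tool. In the applications this is harmless. In Lemma \ref{lemma:K1} one can always pair consecutive elements of $J$, and Lemma \ref{lemma:K2} augments a single adjacent pair. With these corrections your argument for (i) is the paper's one-line citation of \eqref{eq:prodo+}, with the two implicit points made explicit.
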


  \begin{proof}
    Property (i) follows from \eqref{eq:prodo+}. To prove (ii), it is sufficient to note that from \eqref{eq:aug0+} and  \eqref{eq:aug1+} one computes
    \begin{eqnarray*}
      \widetilde{\sigma}_1(\alpha^{-1}x)=\sigma_1(\alpha^{-1}x), \;\; \widetilde{\sigma}_1(a_i)=\sigma_1(a_i) \; \mbox{for} \; i \in \{1,\ldots,s-1\}, \;\; \widetilde{\sigma}_1^2(a_s)=\sigma_1(a_s), \\
    \widetilde{\sigma}_1(d)=\alpha^{-1}x, \;\;  \widetilde{\sigma}_1^{i+1}(d)=a_{i} \; \mbox{for} \; i \in \{1,\ldots,s\} 
      \end{eqnarray*}
  and
 \begin{eqnarray*}
   \widetilde{\sigma}_2^2(x)=\sigma_2(x), \;\; \widetilde{\sigma}_2(b_i)=\sigma_2(b_i) \; \mbox{for} \; i \in \{1,\ldots,t\},\\
   \widetilde{\sigma}_2^i(d)=b_{i} \; \mbox{for} \; i \in \{1,\ldots,t\}, \;\;
 \widetilde{\sigma}_2^{t+1}(d)=x.  
      \end{eqnarray*} 
\end{proof}

  \subsection{A first basic lemma}

 The next  lemma, together with Lemma \ref {lemma:K2} below, are the central ingredients in the proof of Theorem \ref{thm:esistenza}.

\begin{lemma} \label{lemma:K1}
  Fix integers $d \geq 2$ and $n \geq 2$. Let $\left(e_1,\ldots,e_n\right)$ be
an $n$-tuple of
  integers such that $2 \leq e_i \leq d$ for all $i \in \{1,\ldots,n\}$, at least one $e_i=d$, and  $\displaystyle\sum_{i=1}^n e_i-n$ is even. Then there exists an $n$-tuple of cycles $\left(\sigma_1,\ldots,\sigma_n\right)$ in $\Sym(d)$ of lengths $\length \sigma_i=e_i$ such that $\sigma_n\cdots \sigma_1=\id_d$ and
  such that, for all $i \in \{1,\ldots,n-1\}$, the cycles $\sigma_i$ and $\sigma_{i+1}$ are not disjoint.
\end{lemma}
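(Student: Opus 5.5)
The plan is induction on $d$, using the $d$-augmentations of Definition~\ref{def:k-aug} with $\alpha=\id_{d-1}$, applied only to \emph{adjacent} pairs $\{\sigma_i,\sigma_{i+1}\}$. This is why the statement carries the extra hypothesis that consecutive cycles are not disjoint: it is exactly what condition \eqref{eq:cond-aug} needs when $\alpha=\id_{d-1}$, so it must be propagated through the induction.

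For the base case $d=2$, all $e_i=2$ and $n$ is even. Take $\sigma_i=(1\;2)$ for all $i$; the product is $\id_2$ and consecutive cycles share both elements.

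For the inductive step $d\ge 3$:
\begin{enumerate}
\item Choose a family $P$ of pairwise disjoint pairs of adjacent indices $\{i,i+1\}$ whose union $U$ contains every index $i$ with $e_i=d$.
\item Put $e'_i=e_i-1$ for $i\in U$ and $e'_i=e_i$ otherwise. Then $e'_i\le d-1$, some $e'_i=d-1$, and $\sum_i (e'_i-1)$ has the same parity as before, since $2$ is subtracted per pair.
\item An index with $e_i=2$ in $U$ gets $e'_i=1$, i.e.\ an identity, which the lemma does not allow. Delete these indices to get a shorter tuple $\e''$ of length $n''$, and apply induction to $\e''$ in $\Sym(d-1)$. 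Note that $n''\ge 2$ unless $\e''=(d-1)$, and a single $(d-1)$-cycle is not the identity for $d\ge 3$, so this degenerate case cannot occur once the parities are checked.
\item Reinsert the deleted identities in their original positions, writing each as $(x)$ with $x$ moved by the partner in its pair. Here I use that both members of a pair of $P$ cannot be identities, because one partner is needed to carry length $\ge 2$, and I arrange this when building $P$.
\item Augment every pair in $P$. Since the pairs are adjacent and disjoint, each augmentation uses $\alpha=\id_{d-1}$ and they do not interfere, so Lemma~\ref{lemma:K4}(i) gives that the product is still $\id_d$. Lengths go up by one exactly on $U$, recovering $\e$.
\end{enumerate}

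Non-disjointness of consecutive cycles survives augmentation. Within a pair, both cycles now contain $d$. Across pairs, elements moved before are still moved, because augmentation only inserts $d$ into a cycle. Where an identity was deleted, its two neighbours were adjacent in the reduced tuple, so they shared an element there; after reinsertion the identity's chosen $x$ is moved by its partner. That makes the identity move $x$ and $d$, so it meets its partner through $d$. It also needs to meet its other neighbour, and I would choose $x$ to be the common element of the two neighbours guaranteed by the reduced tuple's non-disjointness. This is the main point to check carefully.

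The main obstacle is the combinatorial choice of $P$: covering all indices with $e_i=d$ by disjoint adjacent pairs, while avoiding pairs in which both entries become identities or in which a needed $x$ is unavailable. I would try a greedy left-to-right scan: pair each uncovered index with $e_i=d$ with its right neighbour. If that fails at $i=n$, rescan right-to-left, pairing each such index with its left neighbour. If both scans fail, then the runs of $d$'s have odd lengths and are separated by single gaps filling the whole tuple. In that residual configuration I would use instead the cyclic invariance of $\sigma_n\cdots\sigma_1=\id_d$ under rotation, together with a direct check. Alternatively I would prefer partners with $e_j\ge 3$, so that no identities arise.
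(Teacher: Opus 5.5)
Your proposal has two genuine gaps, and the first one is in the statement itself. The degenerate case you dismiss in step 3 does occur. For $d=4$ and $\e=(4,2)$ all hypotheses hold (here $\sum_i e_i-n=4$), and your reduction produces $\e''=(3)$. No solution exists there: $\sigma_2\sigma_1=\id_4$ forces $\sigma_2=\sigma_1^{-1}$, which is a $4$-cycle. Likewise $(5,2,2)$ fails, since a $5$-cycle is not a product of two transpositions, and your recursion runs $(5,2,2)\to(4,2)\to(3)$. So the statement as printed is false, and the paper's recursion breaks at exactly the same spot: for $(4,2)$ it reaches $n'=1$ and would need Base case II with $n=1$. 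The missing hypothesis is $\sum_i e_i-n\ge 2(d-1)$. Riemann--Hurwitz forces it anyway, because a $d$-cycle makes the group transitive, and it is available where the lemma is used (Base case II of Lemma~\ref{lemma:K2}, for $(d,n)$-admissible tuples). With this hypothesis your reduction preserves the inequality. With a single $e_i=d$ you subtract exactly $2$ from $\sum_i(e_i-1)$; with $m\ge 2$ of them, $\sum_i(e''_i-1)\ge m(d-2)\ge 2(d-2)$. Then $n''=1$ would give $d-2\ge e''_1-1\ge 2(d-2)$, which is impossible for $d\ge 3$.

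The second gap is the choice of $P$, which you leave open and which cannot always be made. If all $e_i=d$ and $n$ is odd (parity then forces $d$ odd; e.g.\ $(3,3,3)$, a true instance), every index must lie in $U$. But disjoint pairs cover an even number of indices, and rotation does not change that. This case needs a direct construction, which is the paper's Base case II. Take pairwise inverse $d$-cycles $\sigma_1,\ldots,\sigma_{n-1}$, then replace $\sigma_{n-1}$ by two copies of a $d$-cycle $\sigma_n$ with $\sigma_n^2=\sigma_{n-1}$; this is possible because $d$ is odd.

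When some $e_i<d$, your rotation idea does work once made precise. Rotate cyclically so that $e_1=d$ and $e_n<d$; the left-to-right scan then never gets stuck. After rotating back, the product is conjugate to $\id_d$, hence equal to it. The only new consecutive pair contains a $d$-cycle, so it is not disjoint. Your description of when both scans fail is also inaccurate: $(5,3,3,5)$ defeats both scans, yet $\{1,2\},\{3,4\}$ works.

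The paper avoids the whole issue by dropping adjacency. It pairs the indices of $J$ (those with $e_i=d$, plus one extra $\xi$ if their number is odd), in effect consecutively in increasing order. It then augments each pair as in Definition~\ref{def:k-aug}, with $\alpha$ the product of the intermediate cycles. Condition \eqref{eq:cond-aug} holds automatically because each pair contains a cycle of maximal length $d-1$. The same is true of your adjacent pairs. So, contrary to your opening paragraph, non-disjointness is not what makes the augmentations in this lemma possible. It is carried along, through the choice of $x$ for reinserted identities, only because Lemma~\ref{lemma:K2} needs it.
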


\begin{proof}
 We set for simplicity $I=\{1,\ldots,n\}$. We first prove the lemma in two base cases. \medskip

  {\bf Base case I: $e_i=2$ for all $i \in I$.} In this case $d=2$ and $\sum_{i=1}^n e_i-n=2n-n=n$, which is even by assumption. Then we may take  $\sigma_i=(1\; 2)$ for all $i \in I$.\medskip

  {\bf Base case II: $e_i=d$ for all $i \in I$.}
If $n$ is even, there exists an $n$-tuple of length-$d$ cycles $\left(\sigma_1, \ldots, \sigma_{n}\right)$ such that
  $\sigma_n \cdots \sigma_1=\id_d$; take for instance pairwise inverse cycles in order.

  If $n$ is odd, then  $n \geq 3$, and,   as $\sum_{i=1}^n e_i-n=dn-n=n(d-1)$ is even by assumption, $d$ must be odd. As $n-1$ is even, there exist as above an $(n-1)$-tuple of length $d$ cycles $\sigma_1, \ldots, \sigma_{n-1}$ such that
  $\sigma_{n-1} \cdots \sigma_{1}=\id_d$. Since $d$ is odd, there is a length $d$ cycle $\sigma_n$ such that
  $\sigma_n^2=\sigma_{n-1}$. (This follows for instance from the observation that
    $\left(1\;2\;\cdots\;d\right)^2=\left(1\;3\;5\;\cdots\;d\;2\;4\;\cdots\; (d-1)\right)$.)
  Thus, $\sigma_n^2\sigma_{n-2} \cdots \sigma_1 =\id_d$, and
  the $n$-tuple  $\{\sigma_1,\ldots, \sigma_{n-2},\sigma_n,\sigma_n\}$ satisfies the conditions in the lemma. \medskip

We are now done with the special cases.

In general, we define $J_0:=\{i \in I \;|\; e_i=d\}$ and denote by $n_0$ its cardinality. (Note that $n_0>0$.)  
  We will prove the lemma  by recursion, with  the base cases  being  cases I-II above.
  
  We may assume that there is some $i \in I$ such that $e_i > 2$ and some
  $i \in I$ such that $e_i <d$. In particular,  $d>2$ and $n_0<n$. The latter means that $I\setminus J_0 \neq \emptyset$.

  We define a subset $J \subset \{1,\ldots,n\}$ as follows:
  \begin{itemize}
\item If $n_0 \geq 2$ and $n_0$ is even, define  $J:=J_0$.
\item If $n_0$ is odd, pick an index $\xi \in I\setminus J_0$
and define $J:=J_0 \cup \{\xi\}$. 
   \end{itemize}
Note that $J$ contains all indices $i$ for which $e_i=d$.

Let $N$ denote the cardinality of $J$, so that
\[
N =
\begin{cases}
n_0, & \mbox{if} \; n_0 \geq 2 \;\mbox{and even}, \\
  n_0+1, & \mbox{if} \; n_0 \;\; \mbox{is odd}.
\end{cases}
\]
Note that $N$ is even. 

We set
\[ e'_i:=
  \begin{cases}
    e_i-1, & \mbox{if} \; i \in J, \\
    e_i, & \mbox{if} \; i \not \in J.
  \end{cases}
\]
Then $1 \leq e'_i \leq d-1$ for all $i \in I$. Moreover,
$e'_i=1$ for at most one index $i$, which is $i=\xi$ and this occurs precisely if 
\[ n_0 \; \mbox{is odd and} \; e_{\xi}=2.\]
We set
\[
  I':=\{i\; | \; e'_i \geq 2\} =
  \begin{cases}
    I, & \mbox{if} \;\; e_{\xi}>2, \\
    I\setminus \{\xi\}, & \mbox{if} \;\; e_{\xi}=2
  \end{cases}
\]
and let $n'$ be the cardinality of $I'$.
Thus
\[
n'=
\begin{cases}
  n, & \mbox{if} \;\; e_{\xi}>2, \\
  n-1, & \mbox{if} \;\; e_{\xi}=2
\end{cases}
  \]
  and
  \[ 2 \leq e'_i \leq d-1\;\; \mbox{for all} \;\; i \in I', \;\; e'_i=d-1 \;\;
\mbox{for all} \;\; i \in J_0.\]
  Setting $e:=\displaystyle\sum_{i \in I} e_i$ and $e':=\displaystyle\sum_{i \in I'} e'_i$, we thus have
\[ e'=
  \begin{cases}
    e-N, & \mbox{if} \; n'=n, \\
    e-N-1, & \mbox{if} \; n'=n-1.
  \end{cases}
\]
Hence $e'-n'=e-n-N$, which is still even, by the assumption that $e-n$ is even and the fact that $N$ is even.

If $e_i'=d-1$ for all $i \in I'$ or $e_i'=2$ for all $i \in I'$, we are in one of the base cases above  (with respect to $I'$). Otherwise, there is some $i \in I'$ such that $e'_i > 2$ and some
  $i \in I$ such that $e'_i <d-1$, and we may repeat the procedure  with $(I',n',d-1)$ in place of $(I,n,d)$, until we reach a base case, for which the lemma holds, as proved above. The lemma is therefore proved if we can show that whenever it holds at a certain step of the procedure, it also holds for the previous step. It therefore suffices to prove the following:

\begin{claim}
 If the lemma is true for $(I',n',d-1)$, it is also true for $(I,n,d)$.  
\end{claim}

\begin{proof}[Proof of claim]
  
By  hypothesis  there exists an $n'$-tuple of cycles $\left(\sigma'_i\right)_{i \in I'}$ in $\Sym(d-1)$ of lengths $\length \sigma'_i=e'_i$ such that $\sigma'_n\cdots \sigma'_1=\id_{d-1}$ (dropping the index $\xi$ if $n'=n-1$) and such that two successive cycles are not disjoint. If $n'=n-1$, define
$\sigma'_{\xi}:=\id_{d-1}$, which has length $1=e'_{\xi}$ by convention. Then  the whole $n$-tuple of cycles $\left(\sigma'_i\right)^n_{i=1}$ in $\Sym(d-1)$ of lengths $\length \sigma'_i=e'_i$ satisfies $\sigma'_n\cdots \sigma'_1=\id_{d-1}$ and two successive nontrivial cycles in the tuple are not disjoint. All $\sigma'_i$ with $i \in J$ have maximal length $d-1$, except possibly for $\sigma'_{\xi}$. We can thus $d$-augment all $\sigma'_i$ with $i \in J$ in pairs to obtain cycles $\sigma_i$ in $\Sym(d)$; if $\sigma'_{\xi}=\id_{d-1}$, we pick
\[
x=
\begin{cases}
  \mbox{a common element in $\sigma'_{\xi-1}$ and $\sigma'_{\xi+1}$, if $1<\xi<n$},\\
 \mbox{any element in $\sigma'_{2}$, if $\xi=1$}, \\ 
 \mbox{any element in $\sigma'_{n-1}$, if $\xi=n$}, 
\end{cases}
  \]
  and we $d$-augment with respect to $x$ (that is, we write
  $\sigma'_{\xi}=(x)$ and set $\sigma_{\xi}=(x \; d)$).
  
Setting
$\sigma_i=\sigma'_i$ for all $i \in I\setminus J$, and considering them as cycles in $\Sym(d)$, we obtain, by Lemma \ref{lemma:K4}, a sequence of cycles $\{\sigma_i\}^n_{i=1}$ in $\Sym(d)$ of lengths $\length \sigma_i=e_i$ satisfying
$\sigma_n\cdots \sigma_1=\id_d$ and two successive cycles in the sequence are still not disjoint. The latter is due to the fact that the process of $d$-augmentation is simply done by inserting $d$ in a suitable spot in the cycles, and by our choice of $x$ in the case $\sigma'_{\xi}=\id_{d-1}$. 
\end{proof}

This finishes the proof of the lemma.
\end{proof}

\subsection{Admissible and extremal $n$-tuples of integers} 

We will now improve Lemma \ref {lemma:K1}. To ease language, we will make the following:

\begin{defn} \label{def:k-adm}
  Fix integers $d \geq 2$ and  $n \geq 2$.  We say that an $n$-tuple of integers $\left(e_1,\ldots,e_n\right)$ is {\em $(d,n)$-admissible} if
  \begin{equation}
    \label{eq:k-adm-1}
    2 \leq e_i \leq d \;\; \mbox{for all} \;\;i \in \{1,\ldots,n\}
  \end{equation}
and 
\begin{equation}
    \label{eq:k-adm-2}
  \displaystyle\sum_{i=1}^n e_i-n \;\; \mbox{is even  and}  \;\; \displaystyle\sum_{i=1}^n e_i-n \geq 2(d-1). 
\end{equation}

We additionally say that $\left(e_1,\ldots,e_n\right)$ is {\em extremal} if
in addition some $e_i=d$ or all $e_i=2$. 
\end{defn}

The reason for the terminology {\it extremal} is that the extremal $(d,n)$-admissible $n$-tuples are the basis cases of the  recursive  procedure in the next  proof. 

\begin{lemma} \label{lemma:K2}
  For  any    $(d,n)$-admissible $n$-tuple of integers $\left(e_1,\ldots,e_n\right)$ there is an $n$-tuple of cycles $\left(\sigma_1,\ldots,\sigma_n\right)$ in $\Sym(d)$ of lengths $\length \sigma_i=e_i$ such that
\begin{itemize}
    \item[(i)] $\sigma_n\cdots\sigma_1=\id_d$,
    \item[(ii)] the subgroup generated by $\sigma_1,\ldots,\sigma_n$ is transitive,
    \item[(iii)] for all $i \in \{1,\ldots,n-1\}$, $\sigma_i$ and $\sigma_{i+1}$ are not disjoint.
      \end{itemize}
\end{lemma}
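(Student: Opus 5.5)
Following the remark after Definition~\ref{def:k-adm}, I plan to induct on $d$, with the extremal tuples as base cases and $d$-augmentation (Lemma~\ref{lemma:K4}) as the inductive step.

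\emph{Base cases.} If all $e_i=2$, admissibility gives $n\ge 2(d-1)$ with $n$ even. I would take the transpositions $(1\,2),(2\,3),\ldots,(d-1\;d)$ in this order, then the same transpositions in reverse order, which consecutive ones share a point and whose product is the identity. The remaining even number of indices I fill with copies of $(d-1\;d)$ inserted next to an equal transposition. These generate a transitive group, consecutive transpositions overlap, and the product is $\id_d$. If some $e_i=d$, Lemma~\ref{lemma:K1} already provides (i) and (iii). Transitivity (ii) is automatic because a $d$-cycle alone acts transitively.

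\emph{Reduction when the tuple is not extremal.} Now all $e_i<d$ and some $e_i>2$. I would lower by $1$ exactly an even number $N\geq 2$ of entries $e_i\geq 3$, setting $e'_i=e_i-1$ there and $e'_i=e_i$ otherwise. The choice of which indices to lower must satisfy two requirements.
\begin{itemize}
\item[(a)] The new tuple must be $(d-1,n)$-admissible, i.e.\ $\sum e'_i-n\geq 2(d-2)$. Since $\sum e'_i-n=\sum e_i-n-N$ and $\sum e_i-n\ge 2(d-1)$, this holds whenever $N=2$. The parity condition is preserved because $N$ is even, and $e'_i\le d-1$ because $e_i<d$.
\item[(b)] Each pair of lowered cycles must afterwards be re-augmentable.
\end{itemize}
If only one entry exceeds $2$, I would instead lower that entry and a $2$, turning the $2$ into $1$, i.e.\ the identity, exactly as the index $\xi$ in Lemma~\ref{lemma:K1}.

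\emph{Lifting back.} By induction there are cycles $\sigma'_i$ in $\Sym(d-1)$ with properties (i)--(iii). I would $d$-augment the pair of lowered indices $i<j$ with respect to $\alpha=\sigma'_{j-1}\cdots\sigma'_{i+1}$, in the paper's product ordering. Lemma~\ref{lemma:K4} then gives (i) and (ii) in $\Sym(d)$. Property (iii) survives because augmentation only inserts $d$ into cycles. If an identity cycle was created, I would choose $x$ as in the proof of Lemma~\ref{lemma:K1} so that the new transposition $(x\;d)$ meets its neighbours.

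\emph{Main obstacle.} The delicate point is condition~\eqref{eq:cond-aug}: $\sigma'_j$ and $\alpha\sigma'_i\alpha^{-1}$ must not be disjoint when neither has maximal length. The easiest fix is to choose two \emph{adjacent} indices $j=i+1$, so that $\alpha=\id$ and (iii) gives the required overlap. That requires two consecutive entries $\geq 3$, which need not exist. The tuple cannot simply be reordered, because (iii) depends on the order, although cyclically rotating the tuple preserves the product condition. So I would first try to prove a strengthened statement with (iii) made cyclic, so that $\sigma_n$ and $\sigma_1$ also overlap, and with the order of the cycles free. Alternatively, I would lower one entry $\geq3$ together with a neighbouring $2$, turning the $2$ into the identity, and track carefully how transitivity and adjacency propagate. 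Which of these two routes works is the step I would settle first.
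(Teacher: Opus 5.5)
\textbf{There is a genuine gap: the inductive step, which is the heart of the lemma, is never completed.} Your base cases are fine.

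Your main plan lowers two entries $\geq 3$, which in general are not adjacent, and then $d$-augments with respect to $\alpha=\sigma'_{j-1}\cdots\sigma'_{i+1}$. That step can fail. The lowered entries satisfy $e'_i\leq d-2$, so neither $\sigma'_i$ nor $\sigma'_j$ has maximal length. Property (iii) of the reduced tuple only controls consecutive cycles, so nothing guarantees condition \eqref{eq:cond-aug} for $\sigma'_j$ and $\alpha\sigma'_i\alpha^{-1}$. You spot this correctly. However, you then leave a choice open between a vague cyclic strengthening and the ``neighbouring $2$'' route, so no complete argument is given.

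\textbf{The missing observation:} the two lowered entries never need to be both $\geq 3$. Pick any $i_0$ with $e_{i_0}\geq 3$ and any neighbour $i_1=i_0\pm1$, which exists since $n\geq 2$, whatever the value of $e_{i_1}$. Lower both entries by one.

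\emph{Case $e_{i_1}\geq 3$.} Then $e'_{i_1}\geq 2$. Property (iii) of the reduced tuple gives \eqref{eq:cond-aug} directly with $\alpha=\id_{d-1}$.

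\emph{Case $e_{i_1}=2$.} Drop the index $i_1$, so $n'=n-1$. You then need to check three things.
\begin{itemize}
\item $n'\geq 2$. If $n=2$, the tuple would be $\{s,2\}$ with $3\leq s<d$, and admissibility would force $s\geq 2(d-1)$, which is impossible.
\item The new sum satisfies $e'-n'=e-n-2$, so the parity is unchanged and $e'-n'\geq 2(d-2)$.
\item A suitable $x$ exists. The cycles $\sigma'_{i_1-1}$ and $\sigma'_{i_1+1}$ are consecutive in the reduced $n'$-tuple, so by the induction hypothesis they share an element $x$. This $x$ is automatically moved by $\sigma'_{i_0}$. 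If $i_1\in\{1,n\}$, take any $x$ moved by $\sigma'_{i_0}$.
\end{itemize}
Reinsert $\sigma'_{i_1}=(x)$ and $d$-augment the adjacent pair $\{\sigma'_{i_0},\sigma'_{i_1}\}$ with respect to $x$ and $\alpha=\id_{d-1}$. Then $\sigma_{i_1}=(x\;d)$ meets $\sigma_{i_0}$ and also meets its other neighbour through $x$.

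Every such $i_0$ has a neighbour falling into one of these two cases, so the construction always applies. Lemma \ref{lemma:K4} then gives (i) and (ii), and (iii) holds because $d$ is only inserted into cycles. This is exactly the paper's argument, and no cyclic strengthening or reordering is needed.
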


\begin{proof}
We set  $I=\{1,\ldots,n\}$. We first prove the lemma in two base cases, corresponding to   extremal    $(d,n)$-admissible $n$-tuples.\medskip

{\bf Base case I: $e_i=2$ for all $i \in I$.} In this case $\sum_{i=1}^n e_i-n=2n-n=n$. Hence, by properties \eqref{eq:k-adm-1}-\eqref{eq:k-adm-2}, we have that $n$ is even and $n \geq 2d-2$. Consider the $2d-2$ transpositions
    \[ \sigma_j=(j\;\;j+1), \;\; \sigma_{d-1+j}=\sigma_{d-j} \;\; \mbox{for} \;\;  j\in \{1,\ldots,d-1\}.\]
    Then the $n$-tuple $\left(\sigma_j\right)_{j=1}^{2(d-1)}$ satisfies conditions (i)-(iii) in the lemma with $n=2(d-1)$. As $n$ is even, we may then for instance
set $\sigma_j:=(1\; 2)=\sigma_{2(d-1)}$ for all $j \in \{2(d-1)+1,\ldots, n\}$ to obtain an $n$-tuple $\left(\sigma_j\right)_{j=1}^{n}$ still satisfying conditions (i)-(iii).\medskip

{\bf Base case II: there is an $i \in I$ such that $e_i=d$.} In this case, by Lemma \ref{lemma:K1}, there
exists an $n$-tuple of cycles $\left(\sigma_1,\ldots,\sigma_n\right)$ in $\Sym(d)$ with $\length \sigma_i=e_i$ satisfying (i) and (iii). Property (ii) is immediate, as at least one of the cycles has length $d$.\medskip

We then proceed  recursively, reducing to one of the two base cases, in the following way. We may assume that $\left(e_1,\ldots,e_n\right)$ is non-extremal, that is,
\begin{equation}
  \label{eq:non-ext}
2 \leq e_i<d \;\; \mbox{for all} \;\;i \in I \;\;\;\; \mbox{and} \;\;\;\; e_i>2 \;\; \mbox{for some $i \in I$ \;\;\;\; (whence $d >2$)}.  
\end{equation}

Pick one $i_0\in I$ for which $e_{i_0} \geq 3$ and pick a neighboring index $i_1 \in I$ of $i_0$, that is, $i_1=i_0-1$ or $i_1=i_0+1$. 
(This means that if $i_0=1$, we take $i_1=2$, if $i_0=n$, we take $i_1=n-1$, and otherwise we have the two choices  $i_1=i_0\pm 1$.) 
Set 
    \[
  e'_i    =\begin{cases}
    e_i-1, & \; \; \mbox{for} \;\; i \in \{i_0,i_1\}, \\
    e_i, & \; \; \mbox{for} \;\; i \in I \setminus \{i_0,i_1\}.
    \end{cases}
\]
Then $e'_i \leq d-1$ for all $i \in I$ and $e'_i \geq 1$ for all $i \in I$, with
equality possibly occurring only for $i=i_1$, when $e_{i_1}=2$. We set 
\[
  I':=\{i\; | \; e'_i \geq 2\}=
  \begin{cases}
    I, & \mbox{if} \;\; e_{i_1}>2, \\
    I\setminus \{i_1\}, & \mbox{if} \;\; e_{i_1}=2
  \end{cases}
\]
and let $n'$ be the cardinality of $I'$.
Thus $n'=n$ or $n-1$. We note that in any event,
$n' \geq 2$. Indeed, $n'=1$ would imply $n=2$ and $\{e_1,e_2\}=\{s,2\}$  for some $s \geq 3$. But then assumption \eqref{eq:k-adm-2} would yield  $s=\displaystyle\sum_{i=1}^n e_i-n \geq 2(d-1)$, which is impossible, as $s<d$ and $d >2$.

Setting $e:=\displaystyle\sum_{i \in I} e_i$ and $e':=\displaystyle\sum_{i \in I'} e'_i$, we thus have
\[ e'=
  \begin{cases}
    e-2, & \mbox{if} \; n'=n, \\
    e-3, & \mbox{if} \; n'=n-1.
  \end{cases}
\]
Hence $e'-n'=e-n-2$, and  assumptions \eqref{eq:k-adm-1}--\eqref{eq:k-adm-2} yield  that
\[ e'-n' \;\; \mbox{is even and}  \;\; e'-n' \geq 2(d-2).\]
In other words, the $n'$-tuple $\left(e'_i\right)_{i \in I'}$  is     $(d-1,n')$-admissible. 

If $\left(e'_i\right)_{i \in I'}$ is extremal, that is, $e'_i=2$ for all $i \in I'$ or $e'_i=d-1$ for some $i \in I'$, we are in one of the base cases above. If not,
\begin{equation}
  \label{eq:non-ext'}
2 \leq e'_i<d-1 \;\; \mbox{for all $i\in I'$} \;\; \mbox{and} \;\; e'_i>2 \;\; \mbox{for at least one $i \in I'$ (whence $d-1 >2$)},  
\end{equation}
(cf. \eqref{eq:non-ext}) and the procedure continues  with $(I',n',d-1)$ in place of $(I,n,d)$, until we reach a base case, for which the lemma holds, as proved above. The lemma is therefore proved if we can show that whenever it holds at a certain step of the procedure, it also holds for the previous step. It therefore suffices to prove the following:

\begin{claim}
 If the lemma is true for $(I',n',d-1)$, it is also true for $(I,n,d)$.  
\end{claim}

\begin{proof}[Proof of claim]
 
By  hypothesis, there is an $n'$-tuple of cycles $\left(\sigma'_i\right)_{i \in I'}$ in $\Sym(d-1)$ of lengths $\length \sigma'_i=e'_i$ generating a transitive subgroup and such that $\sigma'_n\cdots \sigma'_1=\id_{d-1}$ (dropping the index $i_1$ if $n'=n-1$, that is, $e'_{i_1}=1$), and two successive cycles are not disjoint. If $n'=n-1$, we define $\sigma'_{i_1}:=\id_{d-1}$, so that the whole $n$-tuple of cycles $\left(\sigma'_i\right)^n_{i=1}$ in $\Sym(d-1)$ of lengths $\length \sigma'_i=e'_i$ generates a transitive subgroup and satisfies $\sigma'_n\cdots \sigma'_1=\id_{d-1}$ and two successive nontrivial cycles in the $n$-tuple are  not disjoint. 

The two indices $i_0$ and $i_1$ are successive, so by assumption the
pair $\{\sigma'_{i_0},\sigma'_{i_1}\}$ is either not disjoint, or by construction $\sigma'_{i_1}=\id_{d-1}$. In the latter case,  by assumption the cycles $\sigma'_{i_0}$ and $\sigma'_{i_0+2}$ are not disjoint, if $i_0<i_1=i_0+1<n$, and the cycles $\sigma'_{i_0-2}$ and $\sigma'_{i_0}$ are not disjoint $1<i_1=i_0-1<i_0$. We may therefore 
pick
\[
  x =
  \begin{cases}
    \mbox{any element moved by $\sigma'_{i_0}$, if $i_1=1$ or $i_1=n$}, \\
    \mbox{any element moved by both $\sigma'_{i_0}$ and $\sigma'_{i_0+2}$ if $i_0<i_1=i_0+1<n$}, \\
     \mbox{any element moved by both $\sigma'_{i_0-2}$ and $\sigma'_{i_0}$ if $1<i_1=i_0-1<i_0$}.
  \end{cases}
  \]
  
We can now $d$-augment the pair $\{\sigma'_{i_0},\sigma'_{i_1}\}$ (with respect to $x$, in the case $\sigma'_{i_1}=\id_{d-1}$)
to a pair of cycles
$\{\sigma_{i_0},\sigma_{i_1}\}$ in $\Sym(d)$. Setting
$\sigma_i=\sigma'_i$ for all $i \not \in \{i_0,i_1\}$, and considering them as cycles in $\Sym(d)$, we obtain, by Lemma \ref{lemma:K4}, an $n$-tuple of cycles $\left(\sigma_i\right)^n_{i=1}$ in $\Sym(d)$ of lengths $\length \sigma_i=e_i$ satisfying
properties (i)-(iii).  (Property (iii) follows from the fact that the process of $d$-augmentation is done by inserting $d$ in a suitable spot in the cycles, and by our choice of $x$ in the case $\sigma'_{i_1}=\id_{d-1}$.) 
\end{proof}
This finishes the proof of the lemma.
  \end{proof}

\subsection{The proof of Theorem \ref{thm:esistenza}}

Now Theorem \ref{thm:esistenza} is an immediate consequence:

  \begin{proof}[Proof of Theorem \ref{thm:esistenza}]
      The assumption that the Riemann-Hurwitz condition \eqref{eq:RH_cond2}
    is satisfied yields that the $n$-tuple $(e_1,\ldots,e_n)$ is $(d,n)$-admissible.  Hence,  by Lemma \ref{lemma:K2} there are cycles $\sigma_1,\ldots,\sigma_{n}$ in $\Sym(d)$ with $\length \sigma_i= e_i$ 
such that
$\sigma_{n}\cdots\sigma_1=\id_d$ and the subgroup generated by $\sigma_1,\ldots,\sigma_{n}$ is transitive.  
By the
Riemann Existence Theorem there exists a compact Riemann surface $C$ of genus $g$ and a cover $f:C \to \PP^1$ branched at $y_1,\ldots, y_n$ with ramification profile $[e_i,1^{d-e_i}]$ over $y_i$. 
\end{proof}

  \end{document}